\begin{document}

%TCIMACRO{\TeXButton{Begin frontmatter}{\begin{frontmatter}}}%
%BeginExpansion
\begin{frontmatter}%
%EndExpansion

{\LARGE Three remarks on Matula numbers\footnote{%
Preliminary draft, please do not quote without the author permission. I
thank Mar\'{\i}a F. Morales for helpful comments. I also thank Regis de la
Bret\`{e}che who have pointed out to me an important reference, from which I
borrow the term of GIM function. Of course, all errors and/or omissions are
my own. Financial support from the Spanish Ministry of Education through the
project ECO2010-21624 is gratefully acknowledged. }\bigskip }

Albert Burgos

Universidad de Murcia

30100 Espinardo, Murcia, Spain

e-mail: \texttt{albert@um.es}

%TCIMACRO{\TeXButton{Begin abstract}{\begin{abstract}}}%
%BeginExpansion
\begin{abstract}%
%EndExpansion
In \textit{SIAM Review} \textbf{10}, page 273, D. W. Matula described a
bijection between $\mathbb{N}$ and the set of topological rooted trees; the
number is called the Matula number of the rooted tree. The Gutman-Ivi\'{c}%
-Matula (GIM)\ function $g(n)$ computes the number of edges of the unique
tree with Matula number $n$. Since there is a prefix-free code for the set
of prime numbers such that the codelength of each prime $p$ is $2g(p)$, we
show how some properties of the GIM function can be obtained trivially from
coding theorems. 
%TCIMACRO{\TeXButton{End abstract}{\end{abstract}}}%
%BeginExpansion
\end{abstract}%
%EndExpansion

%TCIMACRO{\TeXButton{Begin keywords}{\begin{keyword}}}%
%BeginExpansion
\begin{keyword}%
%EndExpansion
Matula numbers, Kraft's inequality, Shannon's entropy. 
%TCIMACRO{\TeXButton{End keyword}{\end{keyword}}}%
%BeginExpansion
\end{keyword}%
%EndExpansion

%TCIMACRO{\TeXButton{End frontmatter}{\end{frontmatter}}}%
%BeginExpansion
\end{frontmatter}%
%EndExpansion

\section*{Introduction}

$\mathbb{N}$ stands for the set of natural numbers, $\mathbb{W}$ for the set
of whole numbers $\mathbb{N\cup }\{0\}$, $\mathcal{T}$ for the set of finite
and undirected rooted trees, and $\mathbb{T}$ for the set of topological
rooted trees (i.e. all equivalence classes of undirected rooted trees where
the equivalence is the natural isomorphism). Let $S\subset \mathbb{N}$
(respectively $\mathbb{W})$ be a set defined by property such that every
natural number (respectively whole number) has a unique decomposition as a
multisubset of $S$. Since the set of integers is totally ordered, then $S$
is totally ordered. If the least element in $S$ is greater than 1 (resp. $0$%
), the index of an element $s_{i}\in S$ is less than $s$, i.e. $s_{i}\in S$
implies $i<s_{i}$. This leads to a recursive map from the naturals into $%
\mathbb{T}$. We can easily construct examples of such property by setting $%
S=\{b^{n}|n\geq 0\}$ (for some base $b>1$) or $S=\{n!|n\geq 1\}$. In these
two cases, decomposition builds on the addition operation on $\mathbb{W}$.
However, the prototype of all $S$ defined as above is of course the set of
all primes $\mathbb{P=}\left\{ p(n)|n\geq 1\right\} $, for which
decomposition is the product of elements in $\mathbb{P}$. In this case, the
fundamental theorem of arithmetic tells us that the recursive map is a
denumeration of $\mathbb{T}$, as was noted independently by Matula \cite%
{Mat68} and G\"{o}bel \cite{Gob80}, which give an explicit construction of
the map and its inverse. Thus, any statistics on rooted trees may be
assigned to a natural number. This has been done for several statistics
starting with the pioneering work of Gutman, Ivi\'{c} and Elk \cite%
{GutIvicElk83}, with follow-ups by Gutman and Yeh \cite{GutY93}, Gutman and
Ivi\'{c} \cite{GutIvic94}, \cite{GutIvic96}, and Deutsch \cite{Deu12}, among
others. In particular, Gutman, Ivi\'{c} and Elk ask themselves how many
edges does a natural number have, proving that the solution is given by the
unique function completely additive $g$, such that, for any $n\in \mathbb{N}$%
,%
\begin{equation}
g\left( p(n)\right) =1+g\left( n\right) \text{.}
\end{equation}%
This function, dubbed by La Bret\`{e}che and Tenenbaum in \cite{DLBT00} as
the \textit{Gutman-Ivi\'{c}-Matula} (\textit{GIM) function}, has a simple
translation in terms of coding theory: It represents the semi-length of the
codewords when natural numbers are encoded a Dyck alphabet via the Matula
bijection. By exploiting this relation we can apply basic results on
information entropy to provide non-asymptotic results on its behavior.

\section{Preliminaries}

Write $\pi (n)$ for the number of primes less than or equal to $n$, and $p(n)
$ for the $n$-th prime in ascending order. Given $n\in 
%TCIMACRO{\U{2115} }%
%BeginExpansion
\mathbb{N}
%EndExpansion
_{\geq 2}$, if the prime factorization of $n$ is $f_{1}\cdot f_{2}\cdot
\cdots \cdot f_{m}$ and $\left\{ {\Huge \circ }\right\} $ denotes the rooted
tree consisting of a single node, then the function $\tau :\mathbb{N}\mapsto 
\mathbb{T}$ is defined as a recursion:\smallskip 

\begin{enumerate}
\item $\tau (1)=\left\{ {\Huge \circ }\right\} $.

\item For $n\geq 2$, $\tau (n)$ is the tree in which the root is adjacent to
the roots of $\tau (\pi (f_{i}))$ for $1\leq $ $i\leq m$.
\end{enumerate}

The map $\tau $ is a bijection. Its inverse is defined as follows:\smallskip 

\begin{enumerate}
\item $\tau ^{-1}({\Huge \circ })=1$.

\item If the root of tree $\mathbf{t}$ is adjacent to subtrees $\mathbf{t}%
_{1},\mathbf{t}_{2},\ldots ,\mathbf{t}_{m}$, then 
\begin{equation*}
\tau ^{-1}(\mathbf{t})=\prod_{i=1}^{m}{p(\tau ^{-1}(\mathbf{t}_{i}))}. 
\end{equation*}
\end{enumerate}

A prime factorization of a number is in \emph{canonical order} when the
primes are presented in nondecreasing order. An analogue for rooted trees
goes as follows: If $\tau (n)=\mathbf{t}$, the rooted tree $\mathbf{t}$ is
presented \emph{canonically} when:

\begin{enumerate}
\item The rooted trees $\mathbf{t}_{1},\mathbf{t}_{2},\ldots ,\mathbf{t}_{m}$%
, corresponding to the factors $f_{1},f_{2},\cdots ,f_{m}$, respectively,
are presented from left to right.

\item Each rooted tree $\mathbf{t}_{i}$ is presented canonically.
\end{enumerate}

Notice that $n\in \mathbb{P}$ iff $\tau (n)\in \mathbb{T}_{p}$, where $%
\mathbb{T}_{p}\subset $ $\mathbb{T}$ is the set of all planted rooted trees.%
\footnote{%
i.e. rooted trees for which the root has degree 1.}

Let $(\mathbb{N},\cdot ,1)$ denote the commutative monoid of the positive
integers under product. Let the \emph{merging} of rooted trees $\mathbf{t}%
_{1}$ and $\mathbf{t}_{2}$, denoted, $\mathbf{t}_{1}\wedge \mathbf{t}_{2}$,
to be the labeled tree that results from identifying their roots. Now let $(%
\mathbb{T},\wedge ,{\Huge \circ })$ be the commutative monoid of rooted
trees under product, and let $\mathbf{p}\left( \mathbf{t}\right) $ be the
rooted tree whose root is adjacent to subtree $\mathbf{t}$. Since $\tau $ is
a bijection, there is a isomorphism between $(\mathbb{N},\cdot ,1)$ and $(%
\mathbb{T},\wedge ,{\Huge \circ })$, which trivially extends to an
isomorphism of the algebra $\mathcal{N}=\left\langle \mathbb{N},\cdot
,p,1\right\rangle $ onto the algebra $\mathcal{T}=\left\langle \mathbb{T}%
,\wedge ,\mathbf{p},{\Huge \circ }\right\rangle $.

\section{The Matula code}

In what follows, we are motivated to use the algebra $\mathcal{T}$ to encode
the positive integers. Any tree can be written as a string of symbols from
the set $\left\{ \mathbf{p},{\Huge \circ },\wedge ,(,)\right\} $. In any
string representing a tree presented in canonical form, $\mathbf{p}$ is
always followed by $($, the operator $\wedge $ is always followed by $%
\mathbf{p}$, and ${\Huge \circ }$ appears only between $($ and $)$. Hence,
no information is lost if we drop all the $\mathbf{p}$'s, ${\Huge \circ }$%
's, and $\wedge $'s. For example, the rooted tree (b) in the figure above is
represented as $\mathbf{p}(\mathbf{p}(\mathbf{p}({\Huge \circ }{\small )}%
\wedge \mathbf{p}({\Huge \circ }{\small )))}$, which can be simply written
as $\left( \left( \left( {}\right) \left( {}\right) \right) \right) $.

We need now to introduce some relevant formalism. I shall call \emph{alphabet%
} a given set $\Sigma $ such that $2\leq \left\vert \Sigma \right\vert
<+\infty $. Elements of $\Sigma $ are called \emph{symbols}. A \emph{string }%
or\emph{\ word} over $\Sigma $ is any finite sequence of symbols from $%
\Sigma $. The length of a string is the number of symbols in the string (the
length of the sequence) and can be any non-negative integer. The empty
string is the unique string over $\Sigma $ of length $0$, and is denoted $%
\mathbf{e}$. The set of all strings over $\Sigma $ of length $t$ is denoted $%
\Sigma ^{t}$. (Note that $\Sigma ^{0}=\left\{ \mathbf{e}\right\} $ for any $%
\Sigma $.) The set of all finite-length strings over $\Sigma $ is the Kleene
closure of $\Sigma $, denoted $\Sigma ^{\ast }$. For any two strings $%
\mathbf{s}$ and $\mathbf{s}^{\prime }$ in $\Sigma ^{\ast }$, their
concatenation is defined as the sequence of symbols in $\mathbf{s}$ followed
by the sequence of symbols in $\mathbf{s}^{\prime }$, and is denoted $%
\mathbf{ss}^{\prime }$. The empty string serves as the identity element; for
any string $\mathbf{s}$, $\mathbf{es}=\mathbf{se}$ $=\mathbf{s}$. Therefore,
the set $\Sigma ^{\ast }$ and the concatenation operation form a monoid, the
free monoid generated by $\Sigma $. In addition, the length function defines
a monoid homomorphism $\ell :\Sigma ^{\ast }\rightarrow \mathbb{W}$. Given a
set $\mathcal{S}$ with Kleene closure $\mathcal{S}^{\ast }$, a \emph{code}
is a function $c:\mathcal{S}\rightarrow \Sigma ^{\ast }$. The elements of $c(%
\mathcal{S})$\ will be referred to as the \emph{codewords}, and $c$\ is said
to be a $\left\vert \Sigma \right\vert $-ary code. In this paper we shall
assume that any code $c$ satisfies that if $\mathbf{s},\mathbf{s}^{\prime },%
\mathbf{ss}^{\prime }\in c(\mathcal{S})$ and $\mathbf{s}\neq \mathbf{e}$,
then $\mathbf{s}^{\prime }=\mathbf{e}$ (i.e. the code is \emph{prefix-free}).

In accordance with the above definitions, by setting $\Sigma =\left\{
(,)\right\} $ and $\mathcal{S}=\mathbb{P}$, it is clear that $\tau $ induces
a code, which we dub the Matula code, denoted $c_{M}$.

\section{Concluding remarks}

The following are results are the direct translation of the coding approach
into properties of the Gutman-Ivi\'{c}-Matula arithmetic function.

In Gutman, Ivi\'{c} and Elk \cite{GutIvicElk83}, Gutman and Yeh \cite{GutY93}%
, Gutman and Ivi\'{c} \cite{GutIvic94}, and Gutman and Ivi\'{c} \cite%
{GutIvic96} is introduced an arithmetic function capturing some graph
theoretic properties of $\tau $:

\begin{definition}
Let $g:{\Bbb N}\mapsto {\Bbb C}$\ be the unique function completely additive
such that, por any $n\in {\Bbb N}$,%
\begin{equation}
g\left( p(n)\right) =1+g\left( n\right) \text{.}
\end{equation}
\end{definition}

Then, we have:

\begin{lem}
For all $p\in \mathbb{P}$, $\ell \circ c_{M}\left( p\right) =2g(p).$
\end{lem}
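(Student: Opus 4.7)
The plan is to prove a slightly stronger statement by induction: for every $n \in \mathbb{N}$, the canonical parenthesized representation of $\tau(n)$, obtained as in Section 2 by erasing all occurrences of $\mathbf{p}$, $\circ$, and $\wedge$, has length exactly $2g(n)$. Writing this length as $L(n)$, the lemma follows by specializing to $n = p \in \mathbb{P}$, since by definition $c_M(p)$ is precisely the string in question and $\ell$ is the length function.

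The key observation is that $L$ satisfies three structural identities that mirror the defining equations of $g$. First, $L(1) = 0$, because $\tau(1) = \{\circ\}$ collapses to the empty string after the deletions. Second, $L(p(n)) = L(n) + 2$: since $\tau(p(n)) = \mathbf{p}(\tau(n))$, its simplified string is obtained by wrapping that of $\tau(n)$ in one extra pair of parentheses. Third, if $n, m \geq 2$ then $L(n \cdot m) = L(n) + L(m)$: the tree $\tau(nm)$ is the merging $\tau(n) \wedge \tau(m)$, and because every $\wedge$ in the raw expression sits between two $\mathbf{p}$'s (all of which are erased), the simplified string is the literal concatenation of those for $\tau(n)$ and $\tau(m)$. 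These three identities are the exact analogues of $g(1) = 0$, $g(p(n)) = g(n) + 1$, and complete additivity $g(nm) = g(n) + g(m)$, so $L$ and $2g$ coincide by strong induction on $n$, whence $\ell(c_M(p)) = L(p) = 2g(p)$ for any prime $p$.

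The only delicate point is the careful justification of the concatenation identity, namely that after deleting each $\wedge$ between the simplified codes of prime factors, the remaining string is the honest juxtaposition and no further collapse occurs. This is immediate from the syntactic rules for canonical strings recorded in Section 2, which ensure that $\wedge$ is always surrounded by symbols that themselves get erased, and that no new adjacency creates a spurious token. Everything else reduces to routine bookkeeping on the recursive definitions of $\tau$ and $g$.
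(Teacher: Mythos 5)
Your proof is correct, and it takes a genuinely different route from the paper: the paper disposes of the lemma in one line by citing Theorem 3(a) of Gutman and Ivi\'{c} \cite{GutIvic94}, which identifies $g(n)$ with the number of edges of the tree with Matula number $n$, after which the factor of $2$ is immediate (each edge of $\tau(p)$ contributes exactly one opening and one closing parenthesis to the canonical string). You instead re-derive the needed fact from scratch, by showing that the length function $L$ of the simplified string satisfies $L(1)=0$, $L(p(n))=L(n)+2$, and $L(nm)=L(n)+L(m)$, and then invoking the uniqueness clause in the definition of $g$ (equivalently, strong induction using the bound $i<p(i)$ noted in the introduction). This buys self-containedness: the lemma no longer leans on an external reference, and the argument makes explicit why the Dyck encoding doubles the edge count. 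The cost is length, plus one cosmetic inaccuracy worth fixing: in the raw expression a $\wedge$ is preceded by a closing parenthesis, not by a $\mathbf{p}$, so it is not literally \emph{between two $\mathbf{p}$'s}; this does not affect the argument, since deleting the single symbol $\wedge$ between the two substrings yields their exact juxtaposition in any case, but the justification should be stated that way rather than as you wrote it.
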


\begin{proof}
It trivially follows from Theorem 3(a) in Gutman and Ivi\'{c} \cite%
{GutIvic94}.
\end{proof}

Then, we have,

\begin{description}
\item[Conclusion 1:] $M_{\mathbb{P}}=\sum\nolimits_{p\in \mathbb{P}%
}1/4^{g(p)}<1/2.$
\end{description}

\begin{proof}
By Claim 1 and Kraft's inequality.
\end{proof}

And,

\begin{description}
\item[Conclusion 2:] $M_{\mathbb{N}}=\sum\nolimits_{n\in \mathbb{N}%
}1/4^{g(n)}=M_{\mathbb{P}}/4<2.$
\end{description}

\begin{proof}
By Conclusion 1 and equation (2).
\end{proof}

Thus, $M_{\mathbb{P}}$ is a probability, whereas $M_{\mathbb{P}}$ is not,
because $g(2)=1$ and therefore $M_{\mathbb{P}}>1/4$.

Bounds on $g$ appear in Gutman and Ivi\'{c} \cite{GutIvic94}, and \cite%
{GutIvic96}, which show, for all $n\geq 7$, 
\begin{equation}
\text{\textit{\b{g}}}(n)=\frac{\ln n}{\ln (\ln n)}\leq g(n)\leq \frac{3\ln n%
}{\ln 5}=\bar{g}(n).
\end{equation}%
The bounds \textit{\b{g} }and\textit{\ }$\bar{g}$ cannot be improved
upon---i.e. there exist infinite values of $n$ for which they are reached
asymptotically. However, Conclusion 2 implies that most values of $g$ are
close to those of $\bar{g}$. Indeed, in \cite{DLBT00}, La Bret\`{e}che and
Tenenbaum develop a method which evaluates the moments of $g$, showing that
if $G\left( n\right) =\tsum\limits_{1\leq i\leq n}\,g(i)$, then%
\begin{equation*}
G\left( n\right) =\phi n\ln n+O\left( n\ln (\ln n\right) )
\end{equation*}%
for some constant $\phi >0$ (see \cite{DLBT00} Theorem 1 and Theorem 3).
Now, we shall see that Shannon's source coding theorem \cite{Sh48} offers a
proof of the fact that for all $n$, $G\left( n\right) >\phi n\ln n$ whenever 
$\phi \leq 1/\ln 4$.

Let $S$ be a finite subset of $\mathbb{N}$, $2^{S}$ the associated $\sigma $%
-algebra, and $\mu =\left( \mu \left( s\right) \right) _{s\in S}$ a
probability distribution on the elements of $S$. We shall denote by $\mathbf{%
S}$ the discrete random variable taking values\ in $S$ according to $\mu $.
Thus, the expectation for the length of $c_{M}(\mathbf{S})$ is 
\begin{equation}
E\left[ \ell \circ c_{M}(\mathbf{S})\right] =2\tsum\limits_{i=1}^{n}\mu
\left( i\right) \,g(i)\text{.}
\end{equation}

Shannon's coding theorem places a lower bound on this expected length.
Namely, the ratio between the \emph{entropy} of $\mathbf{S}$, $H\left( 
\mathbf{S}\right) =-\tsum\nolimits_{s\in S}\mu \left( s\right) \log \mu
\left( s\right) ,$\footnote{%
Here and thereafter we denote by $\log $ and $\ln $ the logarithms in bases $%
2$ and $e$, respectively.} and the cardinal of the target alphabet $\Sigma $%
. Therefore 
\begin{equation}
E\left[ \ell \circ c_{MG}(\mathbf{S})\right] \geq H\left( \mathbf{S}\right)
=-\tsum\limits_{s\in S}\mu \left( s\right) \log \mu \left( s\right) ,
\label{ShB1}
\end{equation}%
i.e. 
\begin{equation}
2\tsum\limits_{s\in S}\mu \left( s\right) \,g(s)\geq \tsum\limits_{s\in
S}\mu \left( s\right) \log \mu \left( s\right)   \label{ShB2}
\end{equation}%
Trivially, $H\left( \mathbf{S}\right) \leq \log \left\vert S\right\vert $
with equality if and only if $\mu $ is the uniform distribution. Thus, we
get:

\begin{description}
\item[Conclusion 3:] \textit{For all} $n\in \mathbb{N}$,%
\begin{equation}
G\left( n\right) \geq \frac{n}{\ln 4}\ln n\text{.}
\end{equation}
\end{description}

\begin{proof}
Set $n\in {\Bbb N}$. If $S=\left[ n\right] $ and $\mu (i)=1/n$ for all $i\in
S$, (3.6) yieldst $G\left( n\right) \geq n\log n/2=n\ln n/\ln 4$.
\end{proof}

\end{document}